\newtheorem{theorem}{Theorem}
\newtheorem{definition}{Definition}
\newtheorem{construction}{Construction}
\newtheorem{remark}{Remark}
\newtheorem{property}{Property}
\newtheorem{requirement}{Requirement}
\newcommand{\vect}[1]{\accentset{\rightharpoonup}{#1}}
\newcommand{\ppp}{\[\begin{aligned}}
\newcommand{\ooo}{\end{aligned}\]}
\title{On Generalizations of the Newton-Raphson-Simpson Method}
\author{Mario DeFranco}
\begin{document}
\maketitle
\abstract{We present generalizations of the Newton-Raphson-Simpson method. 
Given a positive integer $m$ and the coefficients of a polynomial $f(z)$ of degree at least $m$, we define an iterative algorithm NRS($m$) that evaluates, in our terminology, a sum of $m$ formal zeros of $f(z)$. We define NRS($m$) as the $m$-dimensional Newton-Raphson-Simpson method applied to a certain vector-valued function associated to $f(z)$, and we prove that NRS(1) is equivalent to 
the Newton-Raphson-Simpson method. We also prove that NRS($m$) evaluates certain $\mathscr{A}$-hypergeometric series defined by Sturmfels \cite{Sturmfels}. In order to define these algorithms, we make use of combinatorial objects which we call trees with negative vertex degree. }

\section{Introduction}
The main purpose of this paper is to define a sequence of iterative algorithms NRS($m)$ which generalize the Newton-Raphson-Simpson method. Here $m$ is any positive integer, and we prove that NRS(1) is equivalent to the Newton-Raphson-Simpson method.

We first review the Newton-Raphson-Simpson method. Let $f(z)\colon \mathbb{C} \rightarrow \mathbb{C}$ be a differentiable function and $c_0 \in \mathbb{C}$. The Newton-Raphson-Simpson method defines a sequence $c_N, N\geq 0$ by
\[
c_{N+1} = c_N - \frac{f(c_N)}{f'(c_N)}.
\] 
Then the limit $\displaystyle \lim_{N \rightarrow \infty} c_N$, if it exists, is a zero of $f(z)$. Depending on $f(z)$  and $c_0$, the limit may or may not exist. See Kollerstrom \cite{Kollerstrom} for information about the Newton-Raphson-Simpson method. 

The recursive construction of the Newton-Raphson-Simpson method stems from geometry: when $f(x)$ is a real-valued function on $\mathbb{R}$, an $x$-intercept on the graph of $f(x)$ is approximated by taking the $x$-intercepts of successive tangent lines to $f(x)$. We construct NRS($m$) not geometrically but  algebraically, by choosing a particular way to sum the terms in a certain series obtained from $f(x)$. We call this series a formal zero. See DeFranco \cite{DeFranco 1} and \cite{DeFranco 2} for our derivation of formal zeros and proofs of the coefficient formulas. Also see discussion below. 

The Newton-Raphson-Simpson method generalizes to $m$ dimensions. Let $\vect{z}= (z_1, \dots, z_m) \in \mathbb{C}^m$. For any differentiable function $g: \mathbb{C}^m \rightarrow \mathbb{C}^m$

\[
\vect{g}(z) = (g_1(\vect{z}), \ldots, g_{m}(\vect{z})),
\]
define is the Jacobian matrix $J_{\vect{g}}(z)$ for $1\leq i,j\leq m$ by
\[
(J_{\vect{g}})_{i,j}(z)  = \frac{\partial g_{i}}{\partial x_{j}}(\vect{z}). 
\]
Now with $c_N \in \mathbb{C}^m$, the $m$-dimensional Newton-Raphson-Simpson method generates the sequence
\[
c_{N+1} = c_N - J_{\vect{g}}(c_N)^{-1}\vect{g}(c_N)
\] 
with a starting point $c_0$.

We introduce some terminology and results now. In order to simplify some convergence issues regarding the algorithms, we will assume throughout the paper that $f(z)\in \mathbb{C}[z]$ is a polynomial in $z$ of degree $d$: 
\[
f(z) = \sum_{k=0}^d a_i z^k.
\]
Fix a positive integer $m \leq d$. The complex numbers $a_0, \ldots, a_d$ will be the inputs to the algorithm NRS($m$), but in order to derive the algorithm, we first view the coefficients $a_k$ as indeterminates. We use these indeterminates to define certain graded rings $R_m$, and we consider a certain element $A_m \in R_m$ which we call a sum of $m$ formal zeros of $f(z)$. We define $A_m$ essentially as a multi-variate generating function for trees with negative vertex degree. 

We first express $A_m$ as the limit 
 \begin{equation} \label{Jm sum}
A_m =\lim_{n \rightarrow \infty} A_m(n).
 \end{equation}
 for certain other elements $A_m(n) \in R_m$ for integer $n \geq 0$. We then define auxiliary elements $A_{i,m}(n) \in R_m$ for $0 \leq i \leq m-1$ 
 and make the column vector $\vect{A}_m(n)\in (R_m)^m$
 \[
\vect{A}_m(n)= (A_{0,m}(n), \ldots, A_{m-1,m}(n)).
 \] 
 For each $n$, we establish a system of $m$ equations in $R_m$:  we define a certain function $\vect{f}_m: (R_m)^m \rightarrow (R_m)^m$ obtained from $f(z)$ and prove the matrix equation is 
 \begin{equation} \label{eq in Rm}
J_{\vect{f}_m}(\vect{A}_m(n)) \vect{A}_m(n+1) = J_{\vect{f}_m}(\vect{A}_m(n)) \vect{A}_m(n) -  \vect{f}_m(\vect{A}_m(n)).
 \end{equation}
Multiplying \eqref{eq in Rm} through by the inverse of the Jacobian, we obtain the recursive equation for the $m$-dimensional Newton-Raphson-Simpson method applied to $\vect{f}_m$. Now this $\vect{f}_m$ naturally is a function from $\mathbb{C}^m$ to $\mathbb{C}^m$ when we view $a_i$ as elements of $\mathbb{C}$ instead of indeterminates. We thus define NRS($m$) as applying the $m$-dimensional Newton-Raphson-Simpson method applied to $\vect{f}_m$ with arbitrary starting point in $\mathbb{C}^m$. 
 
 We prove that when $m=1$, NRS(1) is the 1-dimensional Newton-Raphson-Simpson method, i.e.
 \[
 J_{\vect{f}_1}(z)^{-1}(\vect{f}_1(z)) = \frac{f(z)}{f'(z)}. 
 \]
 
Given any subset $z_1, \ldots, z_m$ of zeros of $f(z)$, we construct an attractor point $\alpha$
\[
\alpha = (\alpha_0, \ldots, \alpha_{m-1}) 
\] 
for NRS($m$), 
where 
\[
\alpha_0 = \sum_{i=1}^m z_i. 
\]


In Section \ref{} we define the elements $A_m$ in terms of combinatorial objects which we call trees with negative vertex degree, which are rooted plane trees with extra information. We note that \cite{Decoste} and \cite{Labelle} have also interpreted the iterations of the Newton-Raphson-Simpson method using kinds of trees.  

In Section \ref{fim} we explicitly list the auxiliary functions for $d=5$, $1 \leq m \leq 5$. 


See \cite{DeFranco 2} where we prove that $A_m-A_{m-1}$ for $m \geq 2$ is a formal zero. 

In the next section we present a high-level description of the $A_m$, including their appearance in \cite{Sturmfels}.

\subsection{Formal zeros}

\begin{definition}
Let $R$ be a ring containing some elements $a_0, a_1, \ldots, a_d$ and let $f(z)$ denote the function
\[
f(z) = \sum_{i=0}^d a_i z^i
\]
from $R$ to $R$. We say that an element $Z \in R$ is a formal zero of $f(z)$ in $R$ if 
\[
f(Z) = 0 \in R.
\]
\end{definition}

We note that in the above definition we may extend $f(z)$ to be a power series in $z$, provided that it is still well-defined as a function on $R$. 

In Section \ref{}, for a positive integer $m\leq d$, we define a ring $R_m$ and certain element $A_m \in R_m$. As discussed above, these elements $A_m$ will be the focus of this paper. However, the elements $A_m$ for $m\geq 2$ themselves are not formal zeros; the elements $A_m-A_{m-1}$ \textit{are} formal zeros (note that below $A_{m-1}$ can be viewed as an element of $R_m$, so $A_m-A_{m-1}$ exists in $R_m$). This is why we refer to $A_m$ as a sum of the $m$ formal zeros: 
\[
A_m = A_1 + (A_2 - A_1) + \ldots + (A_m - A_{m-1}).
\]

There are some different ways to approach the $A_m$. 

One way, for example, is to view 
\[
Z_m =Z_m(a_0, a_1, \ldots, a_{m-2},a_{m+1}, a_{m+2}, \ldots) 
\]
as a function of the independent variables $a_k$ for $k \neq m-1, m$ and to view $a_{m-1}$ and $a_m$ as constants. Then we set 
\[
Z_m = -\frac{a_{m-1}}{a_m} + \sum_{\vect{n}} c(\vect{n})  a^{\vect{n}} 
\]
 where 
 \[
 \vect{n} = (n_0, n_1, \ldots, n_{m-2}, n_{m+1}, n_{m+2}, \ldots)
 \]
 is a sequence of non-negative integers $n_i$, almost all zero; where 
 \[
 a^{\vect{n}} = \prod_{i=0, \, \neq m-1,m}^\infty a_i^{n_i};
 \] 
 and where $c({\vect{n}})$ are some coefficients. We can solve for the $c({\vect{n}})$ by using the set of equations 
 \[
\frac{ \partial^{\vect{n}}f}{(\partial  a)^{\vect{n}} } (Z_m) \big |_{a_i =0, i\neq m,m-1}=0
 \]
for all $\vect{n}$. This method yields a sum for $Z_m$ that is equal to $A_m-A_{m-1}$. 
 
 Another method to obtain the $A_m$ is to consider the limits of functions in $R_m$. For example, if we let 
 \[
 g_m(z) = z-\frac{f(z)}{a_m z^{m-1}}, 
 \]
 then the limit 
 \[
\lim_{n \rightarrow \infty} g_m^n (-\frac{a_{m-1}}{a_m})
 \]
is equal to $A_m - A_{m-1}$. Again we view $a_{m-1}$ and $a_m$ as constants, and we interpret expressions with denominators as geometric series. 

In \cite{Sturmfels}, Sturmfels considers differential equations satisfied by the roots of a polynomial and expresses their solutions using certain $\mathscr{A}$-hypergeometric series. He gives formulas for the coefficients $c(\vect{n})$ and denotes some of these solutions by 
\[
-\left [ \frac{a_{m-1}}{a_m}\right ]+\left [ \frac{a_{m-2}}{a_{m-1}}{}\right ].
\]
In Section \ref{NRS(m)} we prove that 
\[
A_m = -\left [ \frac{a_{m-1}}{a_m}\right ].
\]

We now describe the outline of this paper. In Section 2 we prove that NRS(1) is equivalent to Newton-Raphson-Simpson method. In Section 3 we define NRS($m$) using the trees with negative vertex degree and some functions built from $f(z)$ that we call auxiliary functions. In Section 4 we show how to explicitly compute the auxiliary functions. In Section 5 we apply NRS($m$) to actual polynomials and present numerical tables of the associated quantities. In Section 6 we discuss further work.

\section{The iteration number of a tree} \label{NRS(1)}
For each plane tree, we define what we call its iteration number. We then show how the Newton-Raphson-Simpson method is actually summing trees ordered by this iteration number (Theorem \ref{m=1 yields Newton-Raphson-Simpson}). The NRS($m$) will also sum trees by iteration number, but the trees will have negative vertex degree.    

We recall the definition of rooted plane trees. See Chapter 5 of \cite{Stanley}. 
\begin{definition} 
A rooted plane tree $T$ is a non-empty finite acyclic graph equipped with the following data: 

1. One vertex is marked as the root, denoted by $\mathrm{root}(T)$.

2. For a vertex $v \neq \mathrm{root}(T)$, let $\mathrm{children}(v)$ denote the set of vertices that are adjacent to $v$ and not on the path from $v$ to the root. If $v =\mathrm{root}(T)$, then $\mathrm{children}(v)$ denotes the set of vertices adjacent to $v$. Each set $\mathrm{children}(v)$ is equipped with a total order $\prec_v$. 

A vertex in $\mathrm{children}(v)$ is called a child of $v$, and we let $\mathrm{deg}(v)$ denote the order of the set $\mathrm{children}(v)$. Each child $u$ of $v$ determines another rooted plane tree $T(u)$, where $u$ is the root of $T(u)$, and $T(u)$ inherits its other data from $T$. For $v \in \mathrm{children}(\mathrm{root}(T))$, we call $T(v)$ a root subtree of $T$.

Let $T_0$ denote the rooted plane tree that consists of only its root vertex.

For an integer $i \geq 0$, we let 
\[
d_i(T)
\]
denote the number of vertices $v$ in $T$ such that $\mathrm{deg}(v) = i$. 
\end{definition} 

\begin{remark} \label{plane tree sequence}
The set of rooted plane trees is in bijection with the set of finite sequences of rooted plane trees. A rooted plane tree $T$ with $v = \mathrm{root}(T)$ corresponds to a sequence of rooted plane trees 
\[
(T(u_1),\ldots, T(u_{n})))
\]
where $n=\mathrm{deg}(v)$ and
\[
\mathrm{children}(v) = (u_1, \ldots, u_{n} )
\]
and
\[
u_i \prec_{v} u_{i+1}.
\]
The rooted plane tree $T_0$ corresponds to the empty sequence.  
 
 \end{remark}

\begin{requirement} 
In this paper we will require that $d_1(T)=0$ for all rooted plane trees.  
\end{requirement}

\subsection{The iteration number of a plane tree}
\begin{definition}
Let $T$ be a rooted plane tree. We define a non-negative integer $\mathrm{iteration}(T)$, which we call the iteration number of $T$, and we say that $T$ is of iteration $n$ if $\mathrm{iteration}(T)=n$. If $T=T_0$ consists of a single vertex, then define $\mathrm{iteration}(T)$ to be 0. Otherwise, define $\mathrm{iteration}(T)$ to be $n+1$ if either of the following two conditions holds: 

\vspace{3mm}

\noindent1. Exactly one of $T$'s root subtrees is of iteration $n+1$ and the rest are of iteration at most $n$. 

\vspace{1mm}
\noindent2.  Two or more of $T$'s root subtrees are of iteration $n$ and the rest are of iteration less than $n$. 

\vspace{3mm}

\noindent If $T$ satisfies the second condition, we say that $T$ is final. 

\end{definition}

\section{NRS($m$) and trees with negative vertex degree} \label{NRS(m)}
To define the algorithms, we define generalized \L{}ukasiewicz words (Definition \ref{generalized Luk}) and trees with negative vertex degree (Construction \ref{tree with negative vertex degree}). The trees with negative vertex degree are rooted plane trees equipped with some extra information.

\subsection{Generalized \L{}ukasiewicz words and trees with negative vertex degree}
Traversing a plane tree using the preorder (depth-first order) of its vertices will be a key concept in defining trees with negative vertex degree. See Chapter 5 of \cite{Stanley} for the definition of preorder. The root of a rooted plane tree is the first vertex visited in the preorder, and if a vertex $u$ is visited before a vertex $v$ in the preorder, we say that $u$ precedes $v$ (or is to the left of $v$) or $v$ succeeds $u$ (or is to the right of $u$) in the preorder. We use the same terminology for trees with negative vertex degree. 

Recall that a plane tree is uniquely determined by the preorder sequence of its vertex degrees (we will often abbreviate ``preorder sequence of its vertex degrees" to ``preorder sequence"). For plane trees, this sequence of non-negative integers is also called the \L{}ukasiewicz word for the tree. We recall the defining properties of \L{}ukasiewicz words. 
\begin{definition} 
A \L{}ukasiewicz word $l$ may be defined as a sequence $(l_i)_{i=1}^N$ of integers 
such that
\[
l_i \geq 0, \hspace{1cm } \sum_{i=1}^n (l_i-1) \geq 0,\hspace{1cm } \mathrm{and}\hspace{1cm } \sum_{i=1}^N (l_i-1) =-1
\]
for each $n<N$. (Note that according to our convention each $l_i \neq 1$ as well.)
\end{definition}

We next define generalized \L{}ukasiewicz words.
 \begin{definition} \label{generalized Luk}
Define a generalized \L{}ukasiewicz word $l$ to be a sequence $(l_i)_{i=1}^N$ of integers such that 
 \[
 l_i \neq 1, \hspace{1cm } \sum_{i=1}^n (l_i-1) \geq 0,\hspace{1cm } \mathrm{and}\hspace{1cm } \sum_{i=1}^N (l_i-1) =-1
\]
for each $n<N$. 
Define $\mathrm{minDegree}(l)$ to be the smallest (most negative) integer $l_i$ that occurs in $l$. For $m \geq 1$, define $\mathrm{Luk}_m$ to be the set of all generalized \L{}ukasiewicz words $l$ such that $\mathrm{minDegree}(l) \geq-m+1$. 
  \end{definition}

We next describe the correspondence between generalized \L{}ukasiewicz words and trees with negative vertex degree. 

\begin{construction} \label{tree with negative vertex degree}
\emph{Given a generalized \L{}ukasiewicz word $l = (l_i)_{i=1}^N$, we construct a tree $T$ with negative vertex degree in the following way. We construct a new word $U(l)$ from $l$ by taking each $l_i$ in $l$ with $l_i<0$ and replacing it with a string of 0's of length $|l_i|+1$.  Thus the generalized \L{}ukasiewicz word 
\[
l = (2,4,3,0,-4,4,0,0,-1)
\]
yields 
\[
U(l) = ( 2,4,3,0,0,0,0,0,0,4,0,0,0,0).
\]
By construction $U(l)$ is an ordinary \L{}ukasiewicz word and thus is the preorder sequence for some classical plane tree which we call $U(T)$. Now from $U(T)$ we construct the tree $T$ by assigning certain vertices of degree 0 in $U(T)$ to have negative degree, and by also by marking certain other vertices of degree 0 in $U(T)$ as ``canceled" vertices.}  

\emph{For each $l_i<0$ in $l$, consider the set of $|l_i|+1$ vertices of degree 0 in $U(T)$ that came from this $l_i$ and take that the rightmost vertex $v$ of these vertices in the preorder. We assign $v$ the vertex degree $l_i$ and mark the other $|l_i|$ vertices as ``canceled" by $v$. In $T$, these canceled vertices do not have vertex degree 0 nor do they contribute 
to the number $d_0(T)$ of vertices of degree 0 in $T$. In fact, we say that a canceled vertex does not have any vertex degree, but we do consider it a child and subtree of its parent vertex. We say that the classical plane tree $U(T)$ is the} underlying tree of $T$. \emph{We say that $T$ has the preorder sequence $l$.}  

\emph{See Figure \ref{tree example}. Thus $T$ has 9 vertices (the filled-in circles) and 5 canceled vertices (the empty circles). Note the graphical depiction of a tree $T$ with negative vertex degree as given in Figure \ref{tree example} determines its generalized \L{}ukasiewicz word in the following way. Traverse $T$ as usual in the preorder, recording in a sequence $l$ the non-negative number of children each vertex has, forgetting for now if a vertex is canceled or non-canceled. Then for each consecutive string of $n$ canceled vertices, take the first non-canceled vertex $v$ that succeeds this string in the preorder and in $l$ change the degree of $v$ from 0 to $-n$. Remove from $l$ the 0's that correspond to canceled vertices. The resulting sequence is the generalized \L{}ukasiewicz word for $T$. } 

\begin{figure}\label{tree example}
\centering
\begin{tikzpicture}

\tikzset{dot/.style={inner sep=1pt,circle,draw,fill},
         circ/.style={inner sep=1pt,circle,draw}}
\node[dot](z){}
       child{node[dot]{}  child{node[dot]{}  child{node[dot]{}}   child{node[circ]{}}  child{node[circ]{}}} child{node[circ]{} } child{node[circ]{}  } child{node[dot]{}  }    
     } child[missing] child[missing] child[missing]  child{node[dot]{}   child{node[dot]{} } child{node[dot]{} } child{node[circ]{} } child{node[dot]{} } };

\end{tikzpicture}
\caption{The tree with negative vertex degree with generalized \L{}ukasiewicz word
$( 2,4,3,0,-4,4,0,0,-1)$. An empty circle indicates a canceled vertex.}
\end{figure}

\end{construction}

For $m\geq 1$, we identify the set of all plane trees whose vertex degrees are at least $-m+1$ with $\mathrm{Luk}_m$. 

\begin{definition}Let $T$ be a tree with negative vertex degree with preorder sequence $l = (l_i)_{i=1}^N$.
We define the iteration number $\mathrm{iteration}(T)$ of $T$ to be equal to $\mathrm{iteration}(U(T))$, where $U(T)$ is the underlying tree of $T$. We say that $T$ is final if $U(T)$ is final.
We define $\mathrm{terminal}(T)$ to be the number of consecutive 0's at the right end of $l$.
\end{definition}

\begin{remark} \label{build trees}
\emph{We can construct any tree $T$ with negative vertex degree by specifying a sequence of trees $( T_1, T_2, \ldots, T_k )$, where each $T_i$ is a tree of negative vertex degree, and then appropriately assigning negative degrees to those trees $T_i$ that consist of a single vertex. That is, suppose $T_i$ is a single vertex and we assign it to have degree $-h<0$. Then there must be a subsequence of the form 
\begin{equation} \label{kh sequence}
(T_{i-k+1}, T_{i-k+2}, \ldots, T_{i-1}, T_i )
\end{equation}
where $T_j $ consists of a single vertex for $i-k+2 \leq j <i$, and $\mathrm{terminal}(T_{i-k+1}) \geq h-(k-2)$. This motivates the following definition. }
\end{remark}

\begin{definition}
 For integers $k$ and $h$ with $m-1\geq h \geq k-1 \geq 1$, define a $(h,k)_m$-block to be a sequence 
 \[
 B=( T_1, T_0, T_0,\ldots, T_0 )
 \]
 of trees in  $\mathrm{Luk}_m$ where there are $k-1$ trees $T_0$ after $T_1$, and $\mathrm{terminal}(T_1) \geq h-(k-2)$. Define a $1_m$-block to be a sequence consisting of a single tree 
 \[
 B=(T_1 )
 \]
 where $T_1$ is any tree in $\mathrm{Luk}_m$. We refer to both $(h,k)_m$-blocks and $1_m$-blocks as blocks. We refer to the tree $T_1$ in a $(h,k)_m$-block or a $1_m$-block as the tree of the block. 
  \end{definition}

\begin{remark} \label{block sequence} \emph{We identify a tree in $\mathrm{Luk}_m$ with a sequence
\[
( B_1, B_2, \ldots, B_N)
\]
where $N\geq 0$ and $B_i$ is either a $(h,k)_m$-block or a $1_m$-block. The tree $T_0$ corresponds to the empty sequence (when $N=0$). We compare this identification to that of Remark \ref{plane tree sequence}.} We call this sequence the block sequence of $T$. 
\end{remark}
\subsection{The number of generalized \L{}ukasiewicz words with a given degree sequence}
Let 
\[
(d_0, d_1, d_2,  \ldots )
\]
 be a sequence of non-negative integers such that $d_1=0$; only finitely many of the $d_k$ are non-zero; and 
 \[
 \sum_{k=0}^\infty (k-1)d_k = -1.
 \]
  Then the number of \L{}ukasiewicz words 
\[
l = (l_1, l_2, \ldots, l_N)
\]  
such that the integer $k$ appears $d_k$ times in $l$ is equal to 
\[
\frac{(\displaystyle \sum_{k=0}^\infty d_k)!}{\displaystyle (\sum_{k=0}^\infty d_k) \prod_{k=0}^\infty (d_k)! }. 
\]
Theorem 5.3.10 of \cite{Stanley} proves this statement. We present a corresponding result about generalized \L{}ukasiewicz words. The proof in \cite{Stanley} directly carries over and we present it here in that generality. 

\begin{theorem} \label{Lukm count}

Let 
\[
d = (\ldots, d_{-2}, d_{-1}, d_0, d_1, d_2,  \ldots )
\]
 be a sequence of non-negative integers such that $d_1=0$; only finitely many of the $d_i$ are non-zero; and 
 \[
 \sum_{i=-\infty}^\infty (k-1)d_k = -1.
 \]
  The number of generalized \L{}ukasiewicz words 
\[
l = (l_1, l_2, \ldots, l_N)
\]  
with degree sequence $d$ is
\[
\frac{(\displaystyle \sum_{k=-\infty}^\infty d_k)!}{\displaystyle (\sum_{k=-\infty}^\infty d_k) \prod_{k=-\infty}^\infty (d_k)! }. 
\]
\end{theorem}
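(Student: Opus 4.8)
The plan is to transcribe the classical cyclic-shift proof of Theorem 5.3.10 of \cite{Stanley}, checking that at no point does it use the hypothesis $l_i \ge 0$ that is special to the non-generalized case. Write $N = \sum_k d_k$, which is finite since only finitely many $d_k$ are nonzero, and let $\mathcal{W}$ be the set of all length-$N$ integer sequences in which the value $k$ occurs exactly $d_k$ times, so that $|\mathcal{W}|$ equals the multinomial coefficient $N!/\prod_k (d_k)!$ appearing in the statement. Since $d_1 = 0$, no entry of a word in $\mathcal{W}$ equals $1$, and since $\sum_k (k-1)d_k = -1$, every word in $\mathcal{W}$ automatically satisfies $\sum_{i=1}^N (l_i - 1) = -1$; hence the set to be counted is exactly the subset $\mathcal{L} \subseteq \mathcal{W}$ of words that additionally satisfy $\sum_{i=1}^n (l_i - 1) \ge 0$ for all $n < N$.

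First I would let the cyclic group $C_N$ act on $\mathcal{W}$ by rotation and check the action is free: a word fixed by a rotation through $r$ steps with $0 < r < N$ would be periodic with period $p = \gcd(r, N) < N$, forcing $-1 = \sum_{i=1}^N (l_i - 1) = (N/p) \sum_{i=1}^p (l_i - 1)$, which is impossible because $N/p \ge 2$. Hence every orbit has exactly $N$ elements and $|\mathcal{W}| = N \cdot (\text{number of orbits})$.

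The heart of the matter is then the cycle lemma in the form: each $C_N$-orbit contains exactly one element of $\mathcal{L}$. To set this up I would put $c_i = l_i - 1$, $S_m = c_1 + \cdots + c_m$ (so $S_0 = 0$ and $S_N = -1$), and extend the partial sums to all $m \ge 0$ by $\tilde{S}_{m+N} = \tilde{S}_m - 1$; then the rotation of a word that starts at position $j \in \{0, \dots, N-1\}$ lies in $\mathcal{L}$ precisely when $\tilde{S}_j \le \tilde{S}_m$ for every $m$ with $j < m < j + N$. Uniqueness is immediate: if two such indices $j < j'$ both worked, the first would give $\tilde{S}_j \le \tilde{S}_{j'}$ while the second would give $\tilde{S}_{j'} \le \tilde{S}_{j+N} = \tilde{S}_j - 1$, a contradiction. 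For existence I would take $j^\ast$ to be the least index in $\{0, \dots, N-1\}$ at which $\tilde{S}$ attains its minimum over that range; minimality disposes of the $m$ with $j^\ast < m \le N-1$, and for $N \le m < j^\ast + N$ one writes $m = N + m'$ with $m' < j^\ast$ and uses that $j^\ast$ is the \emph{least} such minimizer to conclude $\tilde{S}_{m'} \ge \tilde{S}_{j^\ast} + 1$, hence $\tilde{S}_m = \tilde{S}_{m'} - 1 \ge \tilde{S}_{j^\ast}$. Combining these three steps gives $|\mathcal{L}| = |\mathcal{W}| / N = \tfrac{1}{N} \cdot \tfrac{N!}{\prod_k (d_k)!}$, which is the claimed formula.

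The step I expect to be the only real obstacle is not a computation but the verification inside the cycle lemma that dropping the sign constraint on the $l_i$ changes nothing: one must confirm that every inequality above refers only to the cumulative sums $\tilde{S}_m$ and to the fixed net decrement $-1$ per period, and never to the magnitude of an individual step $c_i$, so that the argument of \cite{Stanley} carries over verbatim. Once that observation is in place the proof is finished.
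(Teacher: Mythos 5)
Your proposal is correct and follows essentially the same route as the paper: both transcribe the cyclic-shift (cycle lemma) argument of Theorem 5.3.10 of \cite{Stanley}, first showing the rotation action on the multiset of letters is free because the total decrement $-1$ is not divisible by any period, and then showing each orbit contains exactly one word with all proper partial sums of $l_i-1$ nonnegative. The only differences are cosmetic (you phrase existence via the least minimizer of periodically extended partial sums, the paper via the minimal index attaining the attained lower bound), so no further comment is needed.
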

\begin{proof} 
Let 
\[
 \sum_{k=-\infty} ^\infty d_k= N.
 \]

Consider the set $\mathcal{A}_d$ of all sequences 
\[
l = (l_1, l_2, \ldots,  l_N)
\]
such that $d_k$ of the $l_i$ equal $k$ and 
\[
 \sum_{i=-\infty}^\infty (i-1)d_i = -1.
 \]
The order of $\mathcal{A}_d$ is thus 
\[
|\mathcal{A}_d|=\frac{(\displaystyle \sum_{k=-\infty}^\infty d_k)!}{\displaystyle  \prod_{k=-\infty}^\infty (d_k)! }.
\]
Let $l \in \mathcal{A}_d$ and let $C(i,l)$ denote the $i$-th conjugate of $l$:  
\[
C(i;l) = (l_{i+1}, l_{i+2}, \ldots, l_{N-1}, l_N, l_1,l_2,\ldots, l_{i-1} )
\]
We claim that these $N$ conjugates are distinct. If $C(i;l) = C(j;l)$ for $j>i$, then that means 
\[
l_{k} = l_{k'}
\]
whenever $k \equiv k' \mod (j-i)$. This implies that $j-i$ divides $N$ and that each $d_k$ is a multiple of $\frac{N}{j-i}$ . By assumption 
\[
\sum_{k=-\infty }^\infty (k-1)d_k  =-1,
\]
so $\frac{N}{j-i}$ divides $1$. But that means $j-i=N$, which is impossible since $1\leq i,j\leq N$. Therefore the $N$ conjugates of $l$ are distinct. 

We claim that exactly one of these conjugates is a generalized \L{}ukasiewicz word. First we show that at least one conjugate is a generalized \L{}ukasiewicz word. Suppose that the negative integer $M$ is an attained lower bound for the partial sums:
\[
\sum_{i=1}^k (l_i -1)\geq M
\]
for all $1\leq k\leq N$ and that
\[
\sum_{i=1}^{k_1} (l_i -1)= M
\]
with $k_1$ minimal (we may assume that $k_1\neq N$, or else $M=-1$ and we are done). Then we claim that the conjugate $w$
\[
w = (l_{k_1+1}, l_{k_1+2},\ldots,l_N, l_1, l_2,..,l_{k_1})
\]
is a generalized \L{}ukasiewicz word. We have
\[
\sum_{i=k_1+1}^k (l_{i}-1) \geq 0
\]
for all $k_1 \leq k \leq N$, or else $M$ would not be a lower bound.

 Now suppose 
\[
\sum_{i=k_1+1}^N (l_{i}-1) +  \sum_{i=1}^k (l_{i}-1)<0 
\]
for some $1 \leq k <k_1$. Since 
\[
\sum_{i=k_1+1}^N (l_{i}-1) = -M-1,
\]
that implies 
\[
\sum_{i=1}^k (l_{i}-1)<M+1,
\]
contradicting the minimality of $k_1$. 
Therefore $w$ is a generalized \L{}ukasiewicz word. 

Now suppose 
\[
w = (w_1, w_2, \ldots, w_N )
\]
is a generalized \L{}ukasiewicz word. 
If some conjugate $w'$ 
\[
w' =(w_j, w_{j+1}, \ldots, w_N, w_1, w_2, \ldots, w_{j-1})
\]
for $j \neq 1$ is also a generalized \L{}ukasiewicz word, then 
\[
\sum_{i=j}^N (w_i - 1) \geq 0
\]
and 
\[
\sum_{i=j}^N (w_i - 1)+ \sum_{i=1}^{j-1} (w_i - 1) = -1.
\]
Therefore 
\[
 \sum_{i=1}^{j-1}( w_i - 1)<0.
\]
But this contradicts the assumption that $w$ is a generalized \L{}ukasiewicz word. Therefore the only conjugate of $w$ that is a generalized \L{}ukasiewicz word is $w$ itself.

Let $\mathcal{L}_d$ denote the set of generalized \L{}ukasiewicz words with degree sequence $d$. Now $\mathcal{L}_d \subset \mathcal{A}_d$, and we have partitioned $\mathcal{A}_d$ into subsets that each have order $N$ such that each subset contains exactly one generalized \L{}ukasiewicz word. 
Thus 
\[
|\mathcal{L}_d| = \frac{|\mathcal{A}_d|}{N}.
\]
This proves the theorem.
 
\end{proof}

\subsection{The ring $R_m$}
Now we proceed to define the ring $R_m$. For $k \geq 0$, let $R_{m,k}$ be the $\mathbb{Q}$-vector space spanned by all monomials of the form 
\begin{equation} \label{rmk monomial}
 \prod_{i=0}^d a_i^{n_i}
\end{equation}
where the $n_i$ are integers such that 
\[
n_{m-2}+n_{m-1} = -k
\]
and the remaining $n_i \geq 0$ such that
\[
\sum_{i=0}^{m-2} n_i +\sum_{i=m+1}^{d} n_i = k. 
\]

Thus an element $r \in R_{m,k}$ is a finite linear combination of monomials of the form \eqref{rmk monomial}. For $r_{k_1} \in R_{m,k_1}$ and $r_{k_2} \in R_{m,k_2}$, then
\[
r_{k_1}r_{k_2} \in R_{m,k_1+k_2}.
\] 
We let $R_m$ be the ring consisting of all elements $r$ of the form  
\begin{equation} \label{r Rm}
r = \sum_{k=0}^\infty r_k
\end{equation}
where $r_k \in R_{m,k}$; and where addition and multiplication in $R_m$ are the usual operations on infinite sums. Note that in the sum \eqref{r Rm} we allow infinitely many of the $r_k$ to be non-zero. 

\begin{definition}Let $T\in \mathrm{Luk}_m$. Define
\[
w_m(T) = \prod_{k=-m+1}^{d-m+1} (-\frac{a_{m+k-1}}{a_m})^{d_k(T)}
\]
We call $w_m(T)$ the $m$-weight of $T$.  
\end{definition}

\subsection{The element $A_m$}
We can now define $A_m \in R_m$. 
\begin{definition}
\[
A_m = \sum_{T \in \mathrm{Luk}_m} w_m(T).
\] 
\end{definition} 
The elements $A_m$ are well-defined elements of $R_m$ because if $w_m(T) \in R_{m,k}$,
then $k$ 
is equal to the number of non-root vertices of $T$ not of degree 0, and there are only finitely many trees $T$ that have $k$ such vertices whose degrees are bounded by $d-m+1$. 

Note that we can also view $A_{m-1}$ as an element of $R_m$, though we will not use that fact in this paper.

We let
\[
\left [ \frac{a_{j-1}}{a_j}  \right] 
\]
denote an $\mathscr{A}$-hypergeometric series: in equation 4.2 of \cite{Sturmfels}, Sturmfels defines $\displaystyle \left [ \frac{a_{j-1}}{a_j}  \right] $ to be the infinite sum 
\begin{equation} \label{sturmfels def}
\left [ \frac{a_{j-1}}{a_j}  \right] = \sum_{i} \frac{(-1)^{i_j}}{i_{j-1}+1} { i_j \choose i_0, i_1, \ldots, i_{j-1}, i_{j+1}, \ldots, i_n } \left(\frac{a_{j-1}}{a_{j}^{i_j+1}}\right) \prod_{k=0, \, k \neq j}^n a_k^{i_k}
\end{equation}
where the sum is over all sequences $i$ of non-negative integers $(i_0, i_1, \ldots, i_n)$ such that
\begin{equation} \label{vertices ik}
\sum_{k=0,\,  k \neq j} ^n i_k = i_j 
\end{equation}
and 
\begin{equation} \label{degree ik}
\sum_{k=0,\,  k \neq j} ^n k i_k = ji_j.
\end{equation}

\begin{theorem} With $a_i=0$ for $i>d$, the $\mathscr{A}$-hypergeometric series $\displaystyle \left [ \frac{a_{m-1}}{a_m}  \right]$ may be viewed as an element of $R_m$. As elements of $R_m$,
\[ 
A_m = -\left [ \frac{a_{m-1}}{a_m}\right ]. 
\]
\end{theorem}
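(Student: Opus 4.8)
The plan is to prove the identity term by term inside $R_m$: I would compute the coefficient of each allowed monomial on the two sides, using Theorem \ref{Lukm count} on the $A_m$ side and Sturmfels' explicit coefficient formula from \cite{Sturmfels} on the $[a_{m-1}/a_m]$ side, and check they agree (up to one overall sign). No induction on $m$ is needed. The first task is to exhibit $[a_{m-1}/a_m]$ as an element of $R_m$. Recall from \cite{Sturmfels} that $[a_{m-1}/a_m]$ is the $\mathscr{A}$-hypergeometric series attached to the pair of consecutive exponents $(m-1,m)$ of $f(z)=\sum_k a_k z^k$, that it is homogeneous under the torus action $a_i \mapsto t_0 \lambda^i a_i$, and that its construction presents it with $a_m$ in every denominator. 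Homogeneity forces each monomial, written as a Laurent monomial in the $a_i$, to have $t_0$-weight $0$, and combined with the last fact this makes each monomial a product of ratios $a_i/a_m$ with $i\neq m$; hence $[a_{m-1}/a_m]$ is a $\mathbb{Q}$-linear combination of the monomials $\prod_{k\geq -m+1}(-a_{m+k-1}/a_m)^{n_k}$ with $n_1=0$ that span $R_m$. The only remaining point for membership in $R_m$ is that each graded piece contains only finitely many monomials, and this is the same finiteness — finitely many $T\in\mathrm{Luk}_m$ with $\sum_i d_i(T)=k$ — that makes $A_m$ well-defined, read off here from the support of the $\mathscr{A}$-hypergeometric series. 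This proves the first assertion and legitimizes the term-by-term comparison of $A_m$ with $-[a_{m-1}/a_m]$.

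Now fix a sequence $d=\{d_k\}_{k\geq -m+1}$ of nonnegative integers with $d_1=0$ and finitely many nonzero, and set $|d|=\sum_k d_k$. On the $A_m$ side, the coefficient of $\prod_k (-a_{m+k-1}/a_m)^{d_k}$ in $A_m=\sum_{T\in\mathrm{Luk}_m}R_m(T)$ is the number of $T\in\mathrm{Luk}_m$ with degree sequence $d$; this is $0$ unless $\sum_k (k-1)d_k=-1$, in which case Theorem \ref{Lukm count} gives the value $\frac{1}{|d|}\cdot\frac{|d|!}{\prod_k d_k!}$. On the other side I would write out Sturmfels' closed form for the coefficient of the same monomial in $[a_{m-1}/a_m]$; in \cite{Sturmfels} this is a ratio of products of factorials indexed by a lattice point in a cone, and I would reindex that lattice so that its points correspond exactly to the sequences $d$ with $\sum_k(k-1)d_k=-1$. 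That relation is not ad hoc: setting $i=m+k-1$, the monomial $\prod_k(-a_{m+k-1}/a_m)^{d_k}$ equals $(-1)^{|d|}\prod_{i\neq m}a_i^{d_{i-m+1}}\cdot a_m^{-|d|}$, which has $t_0$-weight $0$ and $\lambda$-weight $\sum_i (i-m)d_{i-m+1}=\sum_k(k-1)d_k$, so $\sum_k(k-1)d_k=-1$ is precisely the condition that the monomial carry the torus-weight of a root of $f$, which is the support condition for $[a_{m-1}/a_m]$. The factor $(-1)^{|d|}$ above, together with the sign conventions of \cite{Sturmfels}, should collect into the single overall minus sign of $A_m=-[a_{m-1}/a_m]$.

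The crux, and the step I expect to be the real obstacle, is to establish — for each admissible $d$ — that under this reindexing and up to that overall sign, Sturmfels' ratio-of-factorials coefficient collapses to the multinomial-over-$|d|$ value $\frac{1}{|d|}\cdot\frac{|d|!}{\prod_k d_k!}$. By Theorem \ref{Lukm count} the latter is the number of trees in $\mathrm{Luk}_m$ with degree sequence $d$, so the two sides ought to agree; the work is to verify it, cancelling the Gamma and Pochhammer factors in Sturmfels' expression against one another using $\sum_k(k-1)d_k=-1$ until only the multinomial survives, while tracking which $d_k$ land in the numerator and which in the denominator. The structural reason the cancellation succeeds is that for a consecutive pair $(m-1,m)$ the difference of exponents is $1$, which is the codimension-one case of Sturmfels' formula, where the $\mathscr{A}$-hypergeometric coefficient is known to reduce to a cycle-lemma (Dvoretzky--Motzkin) expression — precisely Theorem 5.3.10 of \cite{Stanley}, the result already used to prove Theorem \ref{Lukm count}. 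As a sanity check at the base of the recursion, the sequence with $d_0=1$ and all other $d_k=0$ (the tree $T_0$) gives coefficient $\frac{1}{1}\cdot\frac{1!}{1!}=1$ and monomial $-a_{m-1}/a_m$, matching the leading terms of both $A_m$ and $-[a_{m-1}/a_m]$.
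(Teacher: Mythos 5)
Your plan is essentially the paper's own proof: both compare coefficients monomial by monomial, translate Sturmfels' support conditions $\sum_{k\neq j} i_k = i_j$ and $\sum_{k\neq j} k i_k = j i_j$ into the degree-sequence condition $\sum_k (k-1)d_k = -1$, and match Sturmfels' coefficient against the cycle-lemma count of Theorem \ref{Lukm count}, with the signs collapsing to a single overall minus. The one step you flag as the likely obstacle — collapsing Sturmfels' coefficient to $\frac{1}{|d|}\cdot\frac{|d|!}{\prod_k d_k!}$ — is in fact immediate from the form of equation (4.2) of \cite{Sturmfels}, via the single multinomial identity $\frac{1}{i_{j-1}+1}\binom{i_j}{i_0,\dots,i_{j-1},\dots,i_n} = \frac{1}{i_j+1}\binom{i_j+1}{i_0,\dots,i_{j-1}+1,\dots,i_n}$, so no Gamma/Pochhammer bookkeeping is needed.
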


\begin{proof}To agree with the notation of \cite{Sturmfels}, we let $j=m$. 

Using equation \eqref{vertices ik}, equation \eqref{degree ik} may be rewritten as 
\[
-( i_{j-1}+1)+\sum_{k=0, \, k\neq j, \, j-1}^n (k-j)i_k =-1. 
\]
And 
\begin{equation} \label{interpret count}
 \frac{1}{i_{j-1}+1}{ i_j \choose i_0, i_1, \ldots, i_{j-1}, i_{j+1}, \ldots, i_n } =  \frac{1}{i_j+1}{i_j+1\choose i_0, i_1, \ldots, i_{j-2},i_{j-1}+1, i_{j+1}, \ldots, i_n }. 
\end{equation}
Thus we can interpret each $i_k, k\neq j, j-1 $ as the number of vertices in a tree $T$ with negative vertex degree that have degree $1+k-j$; $i_{j-1}+1$ as the number of vertices that have degree $0$; and $i_j$ as the number of vertices that have vertex degree (that is, are not canceled). By Theorem \ref{Lukm count}, expression \eqref{interpret count} counts the number of all such $T$. The monomial factor in \eqref{sturmfels def} is then $-w_j(T)$. This completes the proof.   
\end{proof}

\subsection{Auxiliary functions $f_{i,m}(x)$}

We perform this sum by ordering the trees $T$ according to their iteration number: 
letting 
\begin{equation} \label{J_m(n)}
A_m(n) = \sum_{T \in \mathrm{Luk}_m, \mathrm{iteration}(T) \leq  n} w_m(T),
\end{equation}
and
\begin{equation} \label{sum by iteration}
A_m = \lim_{n \rightarrow \infty} A_m(n).
\end{equation}
This limit makes sense because for a fixed $k$, the component of $A_m(n)$ in $R_{m,k}$ stabilizes for sufficiently large $n$.
 We introduce the quantities $A_{i,m}(n)$ and establish a system of $m$ equations that are linear in the $A_{i,m}(n)$.  To define $A_{i,m}(n)$ we proceed as follows.

First, recall the construction of trees in $\mathrm{Luk}_m$ discussed in Remark \ref{build trees}. Given integers $h$ and $k$, the number $\mathrm{terminal}(T)$ determines whether $T$ is a valid choice for the tree of an $(k,h)_m$-block. Therefore we define subsets of $\mathrm{Luk}_m$ based on $\mathrm{terminal}(T)$: 

\begin{definition} 
For integer $0 \leq i \leq m-1$, define 
\[
\mathrm{Luk}_{i,m} = \{T \in \mathrm{Luk}_m: \mathrm{terminal}(T) \geq i \}.
\] 

\end{definition}

Thus 
\[
\mathrm{Luk}_m = \mathrm{Luk}_{0,m}
\]
 Refining by the iteration number yields the following terms.  
\begin{definition}
For $0\leq i \leq m-1$, let
\[
\mathrm{Luk}_{i,m}(n) = \{T \in \mathrm{Luk}_{i,m}: \mathrm{iteration}(T) \leq n\}.
\] /Users/marioadefranco/Desktop/Math Tex/Class Tex files/NRS iteration number.tex

\begin{equation} \label{J_i,m(n)}
A_{i,m}(n) = \sum_{T \in \mathrm{Luk}_{i,m}(n)} w_m(T)
\end{equation}
\end{definition}


Thus for $n \geq 1$
\[
J_{m}(n) = \sum_{i=0}^{m-1}J_{i,m}(n).
\]
We next explain how to establish the system of $m$ linear equations satisfied by $A_{i,m}(n)$.

 For general $m$, we will use $m$ auxiliary functions 
 \[
 f_{i,m}(x) \colon R_m \rightarrow R_m
 \]
 where $x$ denotes  the $m$-tuple 
 \[
x  = (x_0, x_1, \ldots, x_{m-1}) 
 \]
We construct the auxiliary functions to have Property \ref{auxiliary function property 1} below.

\begin{definition}
Let $X$ be a subset of $\mathrm{Luk}_m$. 
Define the set $\mathrm{Supertrees}_m(X) \subset \mathrm{Luk}_m$ to be the set of trees $T$ such that if $T'$ is the tree of a block of $T$, 
then $T' \in X$.  

 
 For $X \subset \mathrm{Luk}_m$, define the element $w_m(X;i) \in R_m$
\[
w_m(X;i) = \sum_{T \in  X \cap \mathrm{Luk}_{i,m}} w_m(T)
\]
and let $w_m(X)$ denote the $m$-tuple 
\[
w_m(X) = (w_m(X;0), w_m(X;1), \ldots, w_m(X;m-1)).
\]
\end{definition}

\begin{property} \label{auxiliary function property 1}
\[
f_{i,m}(w_m(X) )= \sum_{T \in \mathrm{Supertrees}_m(X) \cap \mathrm{Luk}_{i,m}} w_m(T).
\]

\end{property}
Thus $f_{i,m}(x)$ outputs the $m$-weights of trees in $ \mathrm{Luk}_{i,m}$ and with prescribed trees in their blocks.

The variable $x_i$ is a placeholder for the $m$-weight of any tree in $\mathrm{Luk}_{i,m}$.
  
To construct the $f_{i,m}(x)$, we consider all possible block sequences of trees in $\mathrm{Luk}_{i,m}$: to simplify notation we say that a tree $T$ has block sequence 
\[
(B_N, \dots, B_2, B_1)
\] 
where $B_{i+1}$ precedes $B_i$ in the preorder of $T$. 
We assign an expression to each block type. The tree of an $(h,k)_m$-block must be $\mathrm{Luk}_{h-(k-2),m}$, and $h-(k-2)$ of it terminal vertices are canceled by the vertex of degree $-h$. Therefore we assign to an $(h,k)_m$-block the expression 
\begin{equation} \label{h,k expression}
 \mathrm{expr}(x;h,k)=x_{h-(k-2)} (-\frac{a_{m-1}}{a_m})^{k-2-h} (-\frac{a_{m-1-h}}{a_m})
\end{equation}
The tree of a $1_m$-block can have any number of terminal vertices, so we assign to a $1_m$-block the expression
\[
 \mathrm{expr}(x;1)=x_0.
\]

This motivates the following function $\mathrm{PT}(x,s)$. For an integer $s\geq 0$, we also allow the last $s$ root subtrees of $T$ in the preorder to be ``unspecified", which we will specify afterward depending on which $\mathrm{Luk}_{i,m}$ $T$ is in. We assign to a block sequence with $s$ unspecified root subtrees the product of the block expression times 
\[
-\frac{a_{m-1+i+s}}{a_m}\mathbf{1}(i+s \geq 2)
\] 
which comes from the $m$-weight of the root of $T$. Summing over all possible non-empty sequences of block types gives the function $\mathrm{PT}(x,s)$:
\begin{definition} For positive integer $k$, define 
\[
 \mathrm{expr}_m(x;k) = \begin{cases} 
 0 &\text{ if } m<k \\
\displaystyle \sum_{h=k-1}^{m-1} \mathrm{expr}_m(x;h,k) &\text{ if $1<k \leq m$}\\
  x_0 &\text{ if $k=1$}
  \end{cases}
\]
and 
\[
\mathrm{PT}(x,s)=\sum_{i=0}^{d-m-s+1} -\frac{a_{m-1+i+s}}{a_m}\mathbf{1}(i+s \geq 2)\sum_{c \in C(i)}  \prod_{j=1}^{\mathrm{length}(c)} \mathrm{expr}(x;c(j))
\]
where $C(i)$ is the set of compositions $c$ of $i$ 
\[
c = (c(1), \ldots, c(n))
\]
with positive integer parts and $\mathrm{length}(c)=n$, and for a statement $W$
\[
\mathbf{1}(W) = 
\begin{cases}
1 \text{ if } W \text{ is true } \\ 
0 \text{ otherwise }.  
\end{cases}
\]
\end{definition}
Note that 
\[
\mathrm{PT}(x,0)
\]
is the sum of all expressions arising from all possible non-empty sequences of block types. 

Now we can define $f_{0,m}(x)$: 
\[
f_{0,m}(x) = -\frac{a_{m-1}}{a_m} +\mathrm{PT}(x,0) 
\]
Now we can define $f_{i,m}(x)$ for $1 \leq i \leq m-1$. If $T \in \mathrm{Luk}_{i,m}$, then either $B_1$ is a $1_m$-block whose tree is in $\mathrm{Luk}_{i,m}$, or  for some $1\leq n \leq i-1$
\[
B_1=B_2=\ldots = B_{n}=(T_0)
\]
and 
$B_{n+1}$ is a $1_m$-block whose tree is in $\mathrm{Luk}_{i-n,m}$. If $i=1$, then $T$ may also equal $T_0$. Therefore 
\[
f_{i,m}(x) =-\frac{a_{m-1}}{a_m} \mathbf{1}(i=1)+\sum_{n=0}^{i-1} x_{i-n}(-\frac{a_{m-1}}{a_m})^n
 \mathrm{PT}(x;n+1).
 \]
 
\begin{definition} Let $T_1 \in \mathrm{Luk}_m \setminus X$. Define the set $\mathrm{Supertrees}_m(X, T_1) \subset \mathrm{Luk}_m$ to be the set of trees $T$ such that $T$ has exactly one block whose tree is $T_1$, and the trees of the remaining blocks are in $X$.
 \end{definition} 
 
\begin{property}  \label{auxiliary function property 2}
Let $T_1 \in \mathrm{Luk}_{j,m} \setminus X$.  Then 
\[
w_m(T_1) \frac{\partial f_{i,m}}{\partial x_j}(w_m(X))  =  \sum_{T \in \mathrm{Supertrees}_m(X,T_1) \cap \mathrm{Luk}_{i,m}} w_m(T).
\]
\end{property}

\subsection{The system of linear equations for $A_{i,m}(n)$} 

\begin{definition}
Recall $x$ denotes the $m$-tuple
\[
x = (x_0, \ldots, x_{m-1}).
\]
Define the function 
$\vect{f}_m \colon (R_m)^m \rightarrow (R_m)^m$ by 
\[
\vect{f}_m(x) = (x_0 - f_{0,m}(x), \ldots, x_{m-1}  - f_{m-1,m}(x)).
\]
For any function 
\[
g(x) = (g_0(x), \ldots, g_{m-1}(x)),
\]
define is the Jacobian matrix $J_g(x)$ for $1\leq i,j\leq m$ by
\[
(J_g)_{i,j}(x )  = \frac{\partial g_{i-1}}{\partial x_{j-1}}(x). 
\]
\end{definition} 

\begin{theorem} 
\end{theorem}
\begin{proof}
By Properties \ref{auxiliary function property 1} and \ref{auxiliary function property 2}
\[
 \sum_{T \in \mathrm{Luk}_{i,m}, \text{ iteration($T$) $\leq n$}} w_m(T) + \sum_{T \in \mathrm{Luk}_{i,m}, \text{ iteration($T$) $= n+1$, \text{ $T$ is final}}} w_m(T)=f_{i,m}(A_m(n)) 
\]
and
\[
 \sum_{T \in \mathrm{Luk}_{i,m}, \text{ iteration($T$) $= n+1$, \text{ $T$ is not final}}} w_m(T)=\sum_{j=0}^{m-1} (A_{j,m}(n+1) - A_{j,m}(n))\frac{\partial f_{i,m}(A_m(n))}{\partial x_j}. 
\]
Adding these two equations yields 
\[
A_{i,m}(n+1) = f_{i,m}(A_m(n)) + \sum_{j=0}^{m-1} (A_{j,m}(n+1) - A_{j,m}(n))\frac{\partial f_{i,m}(A_m(n))}{\partial x_j}.
\]
Viewing 
\[
A_m(n)=(A_{0,m}(n), \ldots, A_{m-1,m}(n))
\]
as a column vector, we can re-arrange \eqref{} and take all $m$ equations to obtain the one matrix equation
\[
J_{\vect{f}_m}(A_m(n)) A_m(n+1) = J_{\vect{f}_m}(A_m(n)) A_m(n)  - \vect{f}_m(A_m(n)). 
\] 
This completes the proof.
\end{proof}

Assuming $J_{\vect{f}_m}(A_m(n))$ is invertible, we have 
\[
A_m(n+1) = A_m(n) - J_{\vect{f}_m}(A_m(n))^{-1} \vect{f}_m(A_m(n))
\]
This is the $m$-dimensional Newton-Raphson-Simpson method applied to the function $\vect{f}_m$ with starting point $A_m(0)$ which is 
\[
A_m(0) = (A_{0,m}(0), \ldots, A_{m-1,m}(0))
\]
where 
\[
A_{0,m}(0) = A_{1,m}(0) = -\frac{a_{m-1}}{a_m}
\]
and 
\[
A_{i,m}(0) = 0
\]
for all other $i$. 

\begin{definition} 
For $a_m \neq 0$, we thus define NRS($m$) as applying the $m$-dimensional Newton-Raphson-Simpson to the function $\vect{f}_m: \mathbb{C}^m \rightarrow \mathbb{C}^m$  with arbitrary starting point in $\mathbb{C}^m$. 
\end{definition}

\begin{theorem} \label{m=1 yields Newton-Raphson-Simpson}
For $a_1 \neq 0$, then NRS(1) is the 1-dimensional Newton-Raphson-Simpson method. 
\end{theorem}
\begin{proof} 
For $m=1$, $x$ is the 1-tuple $x_0$. 
We have that $\vect{f}_1(x_0)$ is 
\begin{align*}
x_0 - f_{0,1}(x_0) &= x_0 -(-\frac{a_0}{a_1} +\sum_{i=2}^d -\frac{a_i}{a_1}x_0^i )\\ 
 &= \frac{f(x_0)}{a_1}
\end{align*}
and 
\[
J_{\vect{f}_1}^{-1}(x_0) = \frac{a_1}{f'(x_0)}.
\]
Thus the sequence of iterations is given by 
\begin{align*}
c_{N+1} &= c_N - J_{\vect{f}_1}^{-1}(c_N) \vect{f}_1(c_N)\\ 
& = c_N - \frac{f(c_N)}{f'(c_N)}.
\end{align*}
This completes the proof.
\end{proof}
We will explicitly construct the auxiliary functions and find numerical solutions to these systems in Section \ref{fim}. 

\section{Explicit construction of the auxiliary functions} \label{fim}

We list the auxiliary functions for a quintic polynomial
\[
f(z) =a_0 + a_1z+a_2z^2 +a_3z^3+a_4z^4+a_5z^5.
\]

\noindent $\mathbf{m=1}$: 
\[
f_{0,1}(x_0) = -\frac{a_0}{a_1} -\frac{a_2}{a_1}x_0^2+-\frac{a_3}{a_1}x_0^3+-\frac{a_4}{a_1}x_0^4+-\frac{a_5}{a_1}x_0^5
\]

\noindent $\mathbf{m=2}$: 
\begin{align*}
f_{0,2}(x_0,x_1)&=  -\frac{a_1}{a_2}  - \frac{a_3 x_0^2}{a_2} - \frac{a_4 x_0^3}{a_2} - \frac{
 a_5 x_0^4}{a_2} -  \frac{a_0 a_3 x_1}{a_1 a_2} -  2\frac{
  a_0 a_4 x_0 x_1}{a_1 a_2)} -  3\frac{ a_0 a_5 x_0^2 x_1}{
 a_1 a_2} -  \frac{a_0^2 a_5 x_1^2}{a_1^2 a_2} \\ 
f_{1,2}(x_0,x_1)&=-\frac{a_1}{a_2}  + 
 x_1 (-\frac{a_3 }{a_2}x_0  -\frac{a_4 }{a_2} x_0^2  -\frac{a_5 }{
    a_2}x_0^3  -\frac{a_0 a_4 }{a_1 a_2} x_1 - 2 \frac{a_0 a_5 }{
    a_1 a_2}x_0 x_1)
\end{align*}
\noindent $\mathbf{m=3}$: 
\begin{align*}
f_{0,3}(x_0, x_1,x_2) = & -\frac{a_2}{a_3} - \frac{a_4 x_0^2}{a_3} - \frac{a_5 x_0^3}{a_3} - \frac{
 a_0 a_5 x_1)}{a_2 a_3} - \frac{
 a_4}{a_3} (\frac{a_1 x_1}{a_2} - \frac{a_0 a_3 x_2}{a_2^2}) -
 2  \frac{a_5 x_0}{a_3} ( \frac{a_1 x_1}{a_2} - \frac{a_0 a_3 x_2)}{a_2^2})) \\
f_{1,3}(x_0, x_1,x_2) = & -\frac{a_2}{a_3}  + 
 x_1 (-(\frac{a_4 }{a_3}x_0 - \frac{a_5 }{a_3}x_0^2 
 - \frac{ a_5}{a_3} (\frac{a_1 }{a_2}x_1 - \frac{a_0 a_3 }{a_2^2} x_2) \\ 
f_{2,3}(x_0, x_1,x_2) = & -((\frac{a_2}{a_3} (\frac{(a_4}{a_3} - (\frac{a_5}{a_3}x_0) x_1))  + 
x_2 (-(\frac{a_4 }{a_3}x_0 - \frac{a_5}{a_3} x_0^2 - (
    \frac{a_5}{a_3} (\frac{a_1}{a_2}x_1 - \frac{a_0 a_3}{a_2^2}x_2)))  
\end{align*}

\noindent $\mathbf{m=4}$: 
\begin{align*}
f_{0,4}(x_0,x_1,x_2,x_3) = &-(a_3/a_4) - x_0 - (a_5 x_0^2)/a_4 - (
 a_5 ((a_2 x_1)/a_3 - (a_1 a_4 x_2)/a_3^2 + (
    a_0 a_4^2 x_3)/a_3^3))/a_4\\ 
f_{1,4}(x_0,x_1,x_2,x_3)= &-(a_3/a_4) - x_1 - (a_5 x_0 x_1)/a_4\\ 
f_{2,4}(x_0, x_1,x_2,x_3) = &(a_3 a_5 x_1)/a_4^2 - x_2 - (a_5 x_0 x_2)/a_4\\  
f_{3,4}(x_0,x_1,x_2,x_3) = & (a_3 a_5 x_2)/a_4^2 - x_3 - (a_5 x_0 x_3)/a_4
\end{align*}

\noindent $\mathbf{m=5}$: 
\begin{align*}
f_{0,5}(x_0, x_1, x_2,x_3,x_4)&= -\frac{a_4}{a_5}\\ 
f_{1,5}(x_0, x_1, x_2,x_3,x_4) &= -\frac{a_4}{a_5}\\  
f_{2,5}(x_0, x_1, x_2,x_3,x_4) &=  0 \\ 
f_{3,5}(x_0, x_1, x_2,x_3,x_4) &=  0\\ 
f_{4,5}(x_0, x_1, x_2,x_3,x_4) &=  0\\
\end{align*}

\end{document}